\documentclass{amsart}
\usepackage{enumerate}
\usepackage{amssymb}
\usepackage[headings]{fullpage}
\DeclareSymbolFont{EulerScriptBold}{U}{eus}{b}{n}
\DeclareSymbolFontAlphabet\eusb{EulerScriptBold}
\def\BB{\mathbb B}
\def\bu{$\bullet$\quad}
\def\CC{\mathbb C}
\def\CF{\mathcal F}
\def\CG{\mathcal G}
\def\CH{\mathcal H}
\def\CO{\mathcal O}
\def\const{\operatorname{const}}
\def\DD{\mathbb D}
\def\ES{\eusb S}
\def\FR{\mathfrak R}
\def\halfskip{\vskip 6pt plus 1pt minus 1pt}
\def\id{\operatorname{id}}
\def\LL{\mathbb L}
\def\OO{\mathbb O}
\def\pder#1#2{\frac{\partial#1}{\partial#2}}
\def\phi{\varphi}
\def\Phi{\varPhi}
\def\rho{\varrho}
\def\RR{\mathbb R}
\def\SS{\mathbb S}
\def\sprod#1#2{\langle #1,#2 \rangle}
\def\str{\operatorname{Str}}
\def\too{\longrightarrow}
\def\tuu{\longmapsto}
\def\wdht{\widehat}
\def\wdtl{\widetilde}
\makeatletter
\def\th@mytheorem{%
  \let\thm@indent\noindent
  \thm@headfont{\bfseries}
    \itshape
}
\def\th@myremark{%
  \let\thm@indent\noindent
  \thm@headfont{\bfseries}
}

\def\@makefnmark{\hbox{$\left(^{\@thefnmark}\right)$\;}}
\makeatother
\theoremstyle{mytheorem}
\newtheorem{Theorem}{Theorem}[section]
\newtheorem{Lemma}[Theorem]{Lemma}
\newtheorem{Proposition}[Theorem]{Proposition}
\theoremstyle{myremark}
\newtheorem{Example}[Theorem]{Example}
\newenvironment{myenumerate}{
\begin{enumerate}[{\rm (a)}]
  \setlength{\itemsep}{1pt}
  \setlength{\parskip}{0pt}
  \setlength{\parsep}{0pt}
}{\end{enumerate}}

\newenvironment{myrenumerate}{
\begin{enumerate}[{\rm (i)}]
  \setlength{\itemsep}{1pt}
  \setlength{\parskip}{0pt}
  \setlength{\parsep}{0pt}
}{\end{enumerate}}

\begin{document}
\title{A note on envelopes of holomorphy}

\author[M.~Jarnicki]{Marek Jarnicki}
\address{Jagiellonian University, Faculty of Mathematics and Computer Science, Institute of Mathematics,
{\L}ojasiewicza 6, 30-348 Krak\'ow, Poland} 
\email{Marek.Jarnicki@im.uj.edu.pl}

\author[P.~Pflug]{Peter Pflug}
\address{Carl von Ossietzky Universit\"at Oldenburg, Institut f\"ur Mathematik,
Postfach 2503, D-26111 Oldenburg, Germany}
\email{Peter.Pflug@uni-oldenburg.de}
\thanks{The research was partially supported by
grant no.~UMO-2011/03/B/ST1/04758 of the Polish National Science Center (NCN)}

\begin{abstract}
Let $p:X\too M$ be a Riemann domain over a connected $n$-dimensional complex submanifold $M$ of $\CC^N$ and let $\CF\subset\CO(X)$ be such that $p\in\CF^N$.
Our aim is to discuss relations between the $\CF$-envelope of holomorphy of $(X,p)$ 
in the sense of Riemann domains over $M$ and the $\CF$-envelope of holomorphy of $X$ in the sense of complex manifolds. 
\end{abstract}

\subjclass[2010]{32D10, 32D15, 32D25}

\keywords{Riemann domain, envelope of holomorphy}

\maketitle

\section{Introduction}
Let $M$ be a connected $n$-dimensional complex submanifold of $\CC^N$, let $(X,p)$ ($p:X\too M$) be a Riemann domain over $M$, and let $\CF\subset\CO(X)$ be such that $p\in\CF^N$ (cf.~\S\;\ref{sectionRiemannDom}).
Our aim is to discuss the notion of the $\CF$-envelope of holomorphy of $X$. More precisely, we like to discuss relations between the $\CF$-envelope of holomorphy of $(X,p)$ 
in the sense of Riemann domains over $M$ and the $\CF$-envelope of holomorphy of $X$ in the sense of complex manifolds. We will see that, even in the case of domains in $\CC^n$, both approaches
lead to some fundamental difficulties. Notice that the problem does not appear in the case where $\CF=\CO(X)$.

Let $\phi:(X,\CF)\too(\wdtl X,\wdtl\CF)$ be the $\CF$-envelope of holomorphy of $X$ in the sense of complex manifolds (cf.~\cite{Vig1982}, see also \S\;\ref{sectionEnvOverComMan}) 
and let $\wdtl p\in\wdtl{\CF}^N$ be such that $\wdtl p\circ\wdtl\phi\equiv p$. 
Observe that $\wdtl p:\wdtl X\too M$ (cf.~the proof of Theorem \ref{ThmThThmStein}). Put $Z_p:=\{a\in\wdtl X: \wdtl p$ is not biholomorphic near $a\}$; 
the set $Z_p$ is an analytic subset of $\wdtl X$ with $\dim Z_p\leq n-1$. The following result characterizes
relations between the $\CF$-envelopes of holomorphy of $X$.

\begin{Theorem}[cf.~Theorem \ref{ThmMain}]
Under the above notation, $\phi:(X,p)\too(\wdtl X\setminus Z_p,\wdtl p)$ is the $\CF$-envelope of holomorphy in the sense of Riemann domains over $M$. Moreover, if $\CF=\CO(X)$, then $Z_p=\varnothing$.
In particular, if $\CF=\CO(X)$, then $(\wdtl X,\wdtl p)$ must be a Stein Riemann domain over $M$.
\end{Theorem}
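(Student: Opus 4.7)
\emph{Proof plan.} The argument splits naturally into three stages: show that $(\wdtl X\setminus Z_p,\wdtl p)$ is an admissible $\CF$-extension of $(X,p)$ in the Riemann-domain category, establish its universal property among such extensions, and finally verify that $Z_p=\varnothing$ when $\CF=\CO(X)$.

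For the first stage, the identity $\wdtl p\circ\phi\equiv p$ combined with the local biholomorphicity of $p$ forces, via the chain rule, $d\wdtl p$ to be invertible at every point of $\phi(X)$; hence $\phi(X)\subseteq\wdtl X\setminus Z_p$. Since $Z_p$ is analytic of codimension at least one in the connected manifold $\wdtl X$, the complement $\wdtl X\setminus Z_p$ is connected, and by definition of $Z_p$ the map $\wdtl p$ is locally biholomorphic on it. Every $f\in\CF$ extends to $\wdtl X$ by construction of the manifold envelope, hence to $\wdtl X\setminus Z_p$ by restriction, so $(\wdtl X\setminus Z_p,\wdtl p)$ is a Riemann domain over $M$ carrying all $\CF$-extensions.

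For the universal property, take any $\CF$-extension $\psi:(X,p)\too(Z,q)$ in the Riemann-domain sense. Applying the universal property of the complex-manifold envelope yields a holomorphic map $\tau:Z\too\wdtl X$ with $\tau\circ\psi=\phi$. The holomorphic maps $\wdtl p\circ\tau$ and $q$ from $Z$ to $M$ agree on the open set $\psi(X)$ (since $\wdtl p\circ\tau\circ\psi=\wdtl p\circ\phi=p=q\circ\psi$), hence coincide on $Z$ by the identity principle; so $\wdtl p\circ\tau=q$. At each $z\in Z$, local biholomorphicity of $q$ combined with $\dim Z=\dim\wdtl X=n$ and the chain rule then force $d\wdtl p$ to be invertible at $\tau(z)$, so $\tau(Z)\subseteq\wdtl X\setminus Z_p$, giving the required factorization.

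Finally, suppose $\CF=\CO(X)$ and set $Y:=\wdtl X\setminus Z_p$. The first two stages together with the Docquier--Grauert theorem identify $(Y,\wdtl p)$ as the Riemann-domain $\CO$-envelope of $(X,p)$ and assert that it is Stein, so $Y$ is a Stein complex manifold. The envelope identifications supply $\CO(Y)\cong\CO(X)\cong\CO(\wdtl X)$ by restriction, so restriction $\CO(\wdtl X)\too\CO(Y)$ is a bijection and $\wdtl X$ qualifies as an $\CO$-extension of $Y$ in Vigué's complex-manifold sense. But a Stein manifold admits no non-trivial $\CO$-extension, so the inclusion $\iota:Y\hookrightarrow\wdtl X$ admits a holomorphic retraction $r:\wdtl X\too Y$; then $\iota\circ r$ equals $\id_{\wdtl X}$ on the open subset $Y$, hence on all of $\wdtl X$ by the identity principle. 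This forces $\wdtl X=Y$ and $Z_p=\varnothing$, with the Stein conclusion an immediate consequence. The principal difficulty is this last bridging step: one must recast the restriction isomorphism $\CO(\wdtl X)\cong\CO(Y)$ as a genuine $\CO$-extension in Vigué's formalism, so that the maximality of the Stein manifold $Y$ as its own envelope becomes applicable.
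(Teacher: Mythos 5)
Your proof is correct, but it takes a genuinely different route from the paper's in both halves. For the identification of the Riemann-domain envelope, the paper does not verify the universal property directly: it first invokes the Thullen theorem to obtain the abstract envelope $\wdht\phi:(X,p)\too(\wdht X,\wdht p)$, then produces morphisms $\sigma:\wdht X\too\wdtl X$ and $\tau:\wdtl X\setminus Z_p\too\wdht X$ from the two universal properties and checks $\sigma\circ\tau=\id$ and $\tau\circ\sigma=\id$ by the identity principle (the latter after showing $\sigma(\wdht X)\subset\wdtl X\setminus Z_p$). Your direct verification --- factor an arbitrary $\CF$-extension $\psi:(X,p)\too(Z,q)$ through the $\CF_C$-envelope, identify $q$ with $\wdtl p\circ\tau$ by the identity principle, and use the chain rule to force $\tau(Z)\subset\wdtl X\setminus Z_p$ --- is cleaner and does not presuppose the existence of the Riemann-domain envelope; you correctly supply the needed connectedness of $\wdtl X\setminus Z_p$. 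For $Z_p=\varnothing$ when $\CF=\CO(X)$, both arguments start from the same two ingredients, namely that $Y:=\wdtl X\setminus Z_p$ is Stein and that restriction $\CO(\wdtl X)\too\CO(Y)$ is bijective, but then diverge: you quote the classical fact that a Stein manifold is its own envelope in the complex-manifold sense to obtain a retraction $r:\wdtl X\too Y$ and finish with the identity principle, whereas the paper runs a holomorphic-convexity argument: a point $a\in Z_p$ has a relatively compact neighbourhood $U$ with $U\subset\wdht K_{\CO(\wdtl X)}$ for some compact $K\subset Y$, so $U\setminus Z_p\subset\wdht K_{\CO(Y)}\subset\subset Y$ and hence $a\in\overline{U\setminus Z_p}\subset Y$, a contradiction. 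Your quoted fact is true but is of essentially the same depth as the paper's hull argument (its proof uses holomorphic separability for injectivity, and holomorphic convexity plus a boundary-sequence argument for surjectivity, of any $C$-morphism out of $Y$), and it is neither proved nor cited in the paper, so you should supply a reference or a short proof at that point. A minor attribution remark: the Steinness of the Riemann-domain envelope is obtained in the paper from the Cartan--Thullen theorem via the retraction construction of Theorem \ref{ThmThThmStein}, not from Docquier--Grauert.
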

The case where $\CF=\CO(X)$ has been discussed in \cite{Ker1959}. The proof will be given in \S\;\ref{sectionMainResult}.

Using the above result we get the following examples (details will be given in \S\S\;\ref{sectionExmpl}, \ref{sectionExmpl-2} ).

\begin{Example}\label{Exmpl} Let $\DD\subset\CC$ be the unit disc and let $\DD_\ast:=\DD\setminus\{0\}$.
Take $M:=\CC$, $X:=\DD_\ast$, $\CF:=\CH^\infty(\DD_\ast)$. Then $\id:(\DD_\ast,\CH^\infty(\DD_\ast))\too(\DD,\CH^\infty(\DD))$ is the $\CH^\infty(\DD_\ast)$-envelope of holomorphy of $\DD_\ast$ in the sense of complex manifolds.

\bu If $p:=\id$, then $Z_p=\varnothing$. 

\bu If $p:=\id^2$, then $Z_p=\{0\}$.

\noindent Consequently, the $\CH^\infty(\DD_\ast)$-envelope of holomorphy of $(\DD_\ast,p)$  depends on a particular choice of the projection $p$ (notice that $\DD$ and $\DD_\ast$ are not homeomorphic). 
\end{Example}

\begin{Example}\label{Exmpl-2}
Let $M:=\CC$, $X:=\CC\setminus((-\infty,1]\cup[1,+\infty))$ (note that $X$ is simply connected), $\CF=\{f_1,f_2\}:=\{\id,X\ni\lambda\tuu\sqrt{1-\lambda^2}\}$ (the branch of the square root is 
arbitrarily fixed), 
$$
\phi:=(f_1,f_2):X\too\wdtl X:=\{(z_1,z_2)\in\CC^2: z_1^2+z_2^2=1\}
$$ 
($\wdtl X$ is a connected one dimensional complex manifold),
$\wdtl\CF=\{\wdtl f_1,\wdtl f_2\}:=\{z_1|_{\wdtl X},z_2|_{\wdtl X}\}$. Then 
$$
\phi:(X,\CF)\too(\wdtl X,\wdtl\CF)
$$ 
is the $\CF$-envelope of holomorphy in the sense of complex manifolds.

Taking $p:=f_1=\id$, we get $\wdtl p=\wdtl f_1$, which implies that $Z_p=\{(0,-1), (0,+1)\}$. Consequently, 
$$
\phi:(X,\id)\too(\wdtl X\setminus\{(0,-1), (0,+1)\},z_1)
$$ 
is the $\CF$-envelope of holomorphy in the sense of Riemann domains.
\end{Example}

The above examples might suggest that the $\CF$-envelope of holomorphy in the sense of complex manifold is perhaps better.
Unfortunately, the following result shows that this is not so.

\begin{Theorem}\label{ThmIsNotStein}
For every $n\geq2$ there exists a domain $X\subset\CC^n$ and a family $\CF\subset\CO(X)$ such that the $\CF$-envelope of holomorphy of $X$ in the sense of complex manifolds is neither Stein nor a Riemann
domain over $\CC^n$.
\end{Theorem}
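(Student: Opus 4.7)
The plan is to produce, for each $n \geq 2$, an explicit domain $X \subset \CC^n$ together with a family $\CF \subsetneq \CO(X)$ for which the envelope $(\wdtl X, \wdtl \CF)$ in the sense of complex manifolds is neither Stein nor a Riemann domain over $\CC^n$. By Theorem~\ref{ThmMain}, taking $\CF = \CO(X)$ would force a Stein Riemann-domain envelope with $Z_{\wdtl p} = \varnothing$, so $\CF$ must be a proper subfamily; the non-Riemann-domain property amounts to arranging $Z_{\wdtl p} \neq \varnothing$, while a natural route to non-Steinness is to arrange that $\wdtl X$ contain a compact positive-dimensional analytic subset.

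It suffices to carry out the construction for $n = 2$: given $(X, \CF)$ in $\CC^2$ with envelope $\wdtl X$, the product $(X \times \CC^{n-2}, \CF \cup \{z_3, \dots, z_n\})$ has envelope $\wdtl X \times \CC^{n-2}$, which is not Stein (since $\wdtl X$ embeds as the closed submanifold $\wdtl X \times \{0\}$, and closed submanifolds of Stein manifolds are Stein) and not a Riemann domain over $\CC^n$ (since the branch locus becomes $Z_{\wdtl p} \times \CC^{n-2}$, which remains nonempty and of dimension $\leq n-1$). For the $n = 2$ construction, the idea is to combine the mechanism of Example~\ref{Exmpl-2}---including in $\CF$ a branch of $\sqrt{P(z_1, z_2)}$ on a simply connected slit domain $X \subset \CC^2$, which supplies the required ramification giving $Z_{\wdtl p} \neq \varnothing$---with a further carefully chosen family of functions whose maximal complex-manifold continuation forces $\wdtl X$ to close up a compact analytic cycle. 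A key observation enabling this is that the envelope in the sense of complex manifolds does not require the natural map $\wdtl X \to \CC^{|\CF|}$ via $\wdtl\CF$ to be injective, so restricting to a proper subfamily $\CF \subsetneq \CO(X)$ allows topological identifications that the additional functions in $\CO(X) \setminus \CF$ would otherwise separate.

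The main obstacle is producing the compact analytic subset in $\wdtl X$: $\CF$ must be small enough that the maximal extension of $\wdtl\CF$ permits the identifications needed to close up a compact cycle, yet large enough to force local coordinates on $\wdtl X$ and the required ramification over $\CC^2$. Once $X$ and $\CF$ are specified, the verification reduces to three checks: (i) identifying $\wdtl X$ as a specific connected complex $2$-manifold containing a compact analytic subvariety; (ii) exhibiting the nonempty branch locus $Z_{\wdtl p}$; and (iii) checking the universal property of $(\wdtl X, \wdtl\CF)$ against Vigué's construction~\cite{Vig1982}. Step~(iii)---verifying that no strictly larger complex-manifold extension of $\wdtl\CF$ exists---is the most delicate point, and is what forces the finely balanced choice of $\CF$.
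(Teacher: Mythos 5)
Your proposal does not contain a proof: it is a plan whose central step is explicitly left open. You write that ``the main obstacle is producing the compact analytic subset in $\wdtl X$'' and that the verification can be done ``once $X$ and $\CF$ are specified,'' but $X$ and $\CF$ are never specified, and no mechanism is given for closing up a compact cycle in the envelope. The entire content of the theorem is the existence of such an example, so nothing has been proved. For comparison, the paper's construction is quite different from what you sketch: it takes the punctured null cone $Y_0=\{z_1^2+\dots+z_{n+1}^2=0\}\setminus\{0\}\subset\CC^{n+1}$, shows that $Y_0$ is the $\CF_C$-envelope of $X_0:=Y_0\setminus\{z_{n+1}=0\}$ for $\CF_0=\CH^\infty(X_0)\cup\{z_1,\dots,z_{n+1}\}$, pulls this back to an actual domain $X\subset\CC^n$ via the Forn{\ae}ss--Zame theorem applied to the two-fold cover $(X_0,p_0)$, and establishes non-Steinness not via a compact analytic subset but by showing $Y_0$ fails to be holomorphically convex (every $f\in\CO(Y_0)$ is bounded along the punctured rays $\lambda\mapsto\lambda b$ near $\lambda=0$, so $\DD_\ast\cdot K\subset\wdht K_{\CO(Y_0)}$).

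There is also a concrete error in your reduction of the second assertion. You claim ``the non-Riemann-domain property amounts to arranging $Z_{\wdtl p}\neq\varnothing$.'' This is false: $Z_{\wdtl p}\neq\varnothing$ only says that the particular projection $\wdtl p$ induced by $\CF$ fails to be locally biholomorphic, whereas the theorem asserts that \emph{no} locally biholomorphic map $q:\wdtl X\too\CC^n$ exists. Example \ref{Exmpl-2} of the paper itself refutes your equivalence: there $Z_p=\{(0,-1),(0,+1)\}\neq\varnothing$, yet $\wdtl X=\{z_1^2+z_2^2=1\}$ is biholomorphic to $\CC_\ast$ and hence certainly a Riemann domain over $\CC$. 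The paper handles this point by a separate argument (steps (e)--(f)): it shows $Y_0$ is an $\CO(Y_0)_C$-domain of holomorphy, so that any locally biholomorphic $q:Y_0\too\CC^n$ would make $(Y_0,q)$ an $\CO(Y_0)$-domain of holomorphy over $\CC^n$ and therefore Stein by Theorem \ref{ThmThThmStein}, contradicting the failure of holomorphic convexity. Your product reduction to $n=2$ would inherit the same difficulty: even granting a two-dimensional example, ruling out \emph{every} locally biholomorphic map $\wdtl X\times\CC^{n-2}\too\CC^n$ requires an argument of this kind, not an inspection of one branch locus.
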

The proof will be given in \S\;\ref{sectionThmIsNotStein}.

\section{Riemann domains over complex manifolds}\label{sectionRiemannDom}
The aim of this section is to recall basic terminology related to Riemann domains over complex manifolds --- cf.~e.g.~\cite{JarPfl2000}.
Let $M$ be a connected $n$-dimensional complex manifold (e.g.~$M=\CC^n$). Denote by $\FR(M)$ the family of all \emph{Riemann regions over $M$}, i.e.~the family of all pairs $(X,p)$, where $X$ is a Hausdorff
topological space and $p:X\too M$ is locally homeomorphic (each point $a\in X$ has an open neighborhood $U$ such that $p(U)$ is open in $M$ and $p|_U:U\too p(U)$ is homeomorphic).
The \emph{projection} $p$ introduces on $X$ a structure $\str(X,p)$ of an $n$-dimensional complex manifold (such that $p$ is locally biholomorphic).
Let $\FR_c(M)$ be the subfamily of those $(X,p)\in\FR(M)$ for which $X$ is connected. 

Let $(X,p),(Y,q)\in\FR(M)$. We say that a continuous mapping $\phi:X\too Y$ is a \emph{morphism} if $q\circ\phi\equiv p$. We shortly write ``$\phi:(X,p)\too(Y,q)$ is a  morphism''.
Each  morphism is locally biholomorphic. We say that a morphism $\phi:X\too Y$ is an \emph{isomorphism} if $\phi$ is bijective and $\phi^{-1}:(Y,q)\too (X,p)$ is also a  morphism.
Notice that a morphism $\phi:(X,p)\too(Y,q)$ is an isomorphism iff $\phi$ is bijective. Each locally biholomorphic mapping $\phi:X\too Y$ induces a homomorphism 
$\CO(Y)\ni g\overset{\phi^\ast}\tuu g\circ\phi\in\CO(X)$. Observe that $\phi^\ast$ is injective iff each connected component of $Y$ intersects $\phi(X)$.  If $\phi^\ast$ is injective and 
$g\circ\phi=f$, then we write $g=f^\phi$.

Let $(X,p), (Y,q)\in\FR_c(M)$ and $\varnothing\neq\CF\subset\CO(X)$. We say that a  morphism $\phi:(X,p)\too(Y,q)$ is an \emph{$\CF$-extension} if $\CF\subset\phi^\ast(\CO(Y))$. We put
$\CF^\phi:=\{f^\phi: f\in\CF\}$. Note that $\phi^\ast|_{\CF^\phi}:\CF^\phi\too\CF$ is bijective. We say that an $\CF$-extension $\phi:(X,p)\too(\wdht X,\wdht p)$ is an 
\emph{$\CF$-envelope of holomorphy} if for every $\CF$-extension $\psi:(X,p)\too(Y,q)$ there exists a  morphism $\sigma:(Y,q)\too(\wdht X,\wdht p)$ such that $\sigma\circ\psi\equiv\phi$.
Observe that in fact $\sigma:(Y,q)\too(\wdht X,\wdht p)$ is  an $\CF^\psi$-extension. 
Such an $\CF$-envelope of holomorphy is uniquely determined up to an isomorphism. By the Thullen theorem the $\CF$-envelope of holomorphy always exists ---
cf.~\cite{JarPfl2000}, Theorem 1.8.4 for the case $M=\CC^n$ (the general case goes in the same way).  
Moreover, if $M$ is Stein, then the $\CF$-envelope of holomorphy is also Stein --- cf.~\cite{JarPfl2000}, Cartan--Thullen Theorem 1.10.4 (the case $M=\CC^n$) and Theorem \ref{ThmThThmStein} 
(the general case). We say that $(X,p)$ is an \emph{$\CF$-domain of holomorphy} if $\id:(X,p)\too(X,p)$ is the $\CF$-envelope of holomorphy.

Our main problem is to discuss the following situation.
Suppose that $(X,p),  (X,q)\in\FR_c(M)$ are such that $\str(X,p)=\str(X,q)$ (equivalently: $q$ is holomorphic in the sense of $\str(X,p)$ and $p$ is holomorphic in $\str(X,q)$). 
Let $\phi_p:(X,p)\too(\wdht X_p,\wdht p)$, $\phi_q:(X,q)\too(\wdht X_q, \wdht q)$ be the $\CF$-envelopes of holomorphy.
We are interested in the situation when there exists a biholomorphic mapping  $\tau:\wdht X_p\too\wdht X_q$ such that $\tau\circ\phi_p\equiv\phi_q$.
It is known that in the case where $M=\CC^n$, $\CF=\CO(X)$ such a biholomorphic mapping  $\tau$ exists (cf.~\cite{JarPfl2000}, 
Theorem 2.12.1). Consequently, if $M=\CC^n$ and $\CF=\CO(X)$, then the $\CO(X)$-envelope of holomorphy depends only on the complex structure $\str(X,p)$.

The next problem appears when $p:X\too M$ and $q:X\too M'$ are Riemann domains over different connected $n$-dimensional Stein manifolds such that $\str(X,p)=\str(X,q)$. 
Observe that $\id:(M,\id)\too(M,\id)$ is the $\CF$-envelope of holomorphy over $M$ for \emph{any} $\CF$. This may lead to some pathological situations, 
e.g.~$\id:(\DD_\ast,\id)\too(\DD_\ast,\id)$ is the $\CH^\infty(\DD_\ast)$-envelope of holomorphy over $M=\DD_\ast$ but not over $M=\CC$.
If $M$ and $M'$ are biholomorphic, then the situation is simple: if $\Phi:M\too M'$ is biholomorphic, then the mapping $\FR(M)\ni(X,p)\tuu(X,\Phi\circ p)\in\FR(M')$ is bijective and
$\str(X,p)=\str(X,\Phi\circ p)$; moreover, $\phi:(X,p)\too(Y,q)$ is an $\CF$-extension (resp.~$\CF$-envelope of holomorphy) over $M$ iff
$\phi:(X,\Phi\circ p)\too(Y,\Phi\circ q)$ is an $\CF$-extension (resp.~$\CF$-envelope of holomorphy) over $M'$. In particular, by the Remmert embedding theorem, we may always assume that 
our Stein manifold $M$ is a connected $n$-dimensional complex submanifold of $\CC^N$.

\section{Domains over Stein manifolds vs.~domains over $\CC^N$}
The aim of this section (inspired by \cite{Ros1963}) is to show that in fact many properties of the Riemann domains over Stein manifolds may be easily deduced from the corresponding properties of 
Riemann domains over $\CC^N$.
Let $M$ be a connected $n$--dimensional complex submanifold of $\CC^N$ and
let $\sigma:S\too M$ be a holomorphic retraction, where $S\subset\CC^N$ is a domain (cf.~\cite{GunRos1965}, Ch.~VIII,~C, Theorem 8). 

\begin{Lemma}\label{LemX0}
Let $(X,p)\in\FR(M)$. Put 
\begin{gather*}
\wdtl X_0:=\{(x,v)\in X\times\CC^N: p(x)+v\in S,\;\sigma(p(x)+v)=p(x)\},\quad p_0:\wdtl X_0\too S,\; p_0(x,v):=p(x)+v.
\end{gather*}
Then $(\wdtl X_0,p_0)\in\FR(\CC^N)$.
\end{Lemma}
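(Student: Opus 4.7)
The plan is to verify the two defining properties of an element of $\FR(\CC^N)$ directly: Hausdorffness of $\wdtl X_0$ and local homeomorphy of $p_0$. Since $\wdtl X_0$ sits inside the product $X\times\CC^N$ of two Hausdorff spaces, it inherits the Hausdorff property for free, so the content is entirely in producing local sections for $p_0$.

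First I would fix an arbitrary point $(x_0,v_0)\in\wdtl X_0$ and use the fact that $(X,p)\in\FR(M)$ to choose an open neighborhood $U_0$ of $x_0$ in $X$ on which $p$ restricts to a homeomorphism onto the open set $p(U_0)\subset M$. Then I would set
$$
W_0:=\sigma^{-1}(p(U_0))\cap S,
$$
which is open in $S$, and hence open in $\CC^N$ since $S$ is a domain; and I would set $U:=\{(x,v)\in\wdtl X_0:x\in U_0\}$, which is open in $\wdtl X_0$.

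The key step is to exhibit an explicit inverse to $p_0|_U$. The natural candidate is
$$
\Psi:W_0\too U,\qquad \Psi(w):=\bigl((p|_{U_0})^{-1}(\sigma(w)),\;w-\sigma(w)\bigr).
$$
I would check that $\Psi$ takes values in $U$: indeed, for $x:=(p|_{U_0})^{-1}(\sigma(w))$ and $v:=w-\sigma(w)$ one has $p(x)+v=w\in S$ and $\sigma(p(x)+v)=\sigma(w)=p(x)$, so $(x,v)\in\wdtl X_0$; moreover $x\in U_0$, so $(x,v)\in U$. Continuity of $\Psi$ follows from the continuity of $\sigma$ and $(p|_{U_0})^{-1}$. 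A direct computation then shows $p_0\circ\Psi=\id_{W_0}$, and $\Psi\circ p_0|_U=\id_U$ since the retraction identity $\sigma(p(x)+v)=p(x)$ holds on $\wdtl X_0$ and $p|_{U_0}$ is injective on $U_0$. Hence $p_0|_U:U\too W_0$ is a homeomorphism onto an open subset of $\CC^N$.

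I do not anticipate a real obstacle here: the construction is essentially the observation that, in the chart provided by $p|_{U_0}$, the Riemann domain $\wdtl X_0$ is locally the graph of the retraction $\sigma$ over $W_0$. The only thing that requires any care is not to confuse openness in $S$ with openness in $\CC^N$, and this is immediate because $S$ is a domain in $\CC^N$.
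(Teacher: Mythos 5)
Your proposal is correct and follows essentially the same route as the paper: fix a point, take a neighborhood $U_0$ on which $p$ is a homeomorphism, and exhibit the explicit inverse $w\tuu\bigl((p|_{U_0})^{-1}(\sigma(w)),\,w-\sigma(w)\bigr)$ of $p_0$ restricted to $\wdtl X_0\cap(U_0\times\CC^N)$. The paper's map $\rho$ is exactly your $\Psi$ (with $V'=\sigma^{-1}(p(U_0))$ in place of your $W_0$, which coincides with it since $\sigma$ is defined on $S$), so there is nothing further to add.
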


Observe that $X\times\{0\}\subset\wdtl X_0$ and $p_0(x,0)=p(x)$.

\begin{proof}
Fix a point $(x_0,v_0)\in\wdtl X_0$. Let $U\subset X$ be an open neighborhood of $x_0$ such that $p|_U:U\too p(U)$ is biholomorphic.
Define $V:=\wdtl X_0\cap(U\times\CC^N)$, $V':=\sigma^{-1}(p(U))$. It suffices to show that
$p_0|_V:V\too V'$ is homeomorphic. Clearly, $p_0(V)\subset V'$. Define $\rho:V'\too X\times\CC^N$,
$\rho(z):=((p|_U)^{-1}(\sigma(z)), z-\sigma(z))$. Observe that $\rho(V')\subset V$. Moreover,
$p_0\circ\rho=\id_{V'}$ and $\rho\circ p_0=\id_V$.
\end{proof}

As a direct corollary we get the following proposition.

\begin{Proposition}\label{PropX0}
Let $(X,p)\in\FR_c(M)$ and let $(\wdtl X_0,p_0)$ be as in Lemma \ref{LemX0}. Let $X_0$ be the connected component of $\wdtl X_0$ that contains $X\times\{0\}$. Then:

\bu $(X_0,p_0)\in\FR_c(\CC^N)$,

\bu $X\times\{0\}\simeq X$ is a submanifold of $X_0$,

\bu the mapping $X_0\ni(x,v)\overset{\sigma_X}\tuu x\in X\simeq X\times\{0\}$ is a holomorphic retraction.
\end{Proposition}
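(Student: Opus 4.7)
The plan is to extract everything from the local picture already established in the proof of Lemma \ref{LemX0}. Recall that near any point $(x_0, v_0)\in\wdtl X_0$ we have an open neighborhood $V = \wdtl X_0\cap (U\times\CC^N)$ and an open neighborhood $V'=\sigma^{-1}(p(U))$ of $p_0(x_0,v_0)$ in $S\subset\CC^N$, together with the explicit biholomorphic inverse $\rho(z)=((p|_U)^{-1}(\sigma(z)),\,z-\sigma(z))$ of $p_0|_V$.

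First I would verify the inclusion $X\times\{0\}\subset \wdtl X_0$: since $\sigma$ is a retraction onto $M$ and $p(x)\in M$, the equality $\sigma(p(x)+0)=p(x)$ is automatic. Because $X$ is connected, so is $X\times\{0\}$; and since $\wdtl X_0\in\FR(\CC^N)$ is locally Euclidean, each connected component is open. Hence the component $X_0$ containing $X\times\{0\}$ is open in $\wdtl X_0$, and restricting Lemma \ref{LemX0} to this open set gives $(X_0,p_0)\in\FR_c(\CC^N)$, which is the first bullet.

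Next I would show that $X\times\{0\}$ is a closed complex submanifold of $X_0$. Choose $x_0\in X$ and a biholomorphic chart $p|_U\colon U\to p(U)\subset M$ at $x_0$; then $(X\times\{0\})\cap V = (p_0|_V)^{-1}(p(U))$. Since $M$ is a complex submanifold of $\CC^N$ (hence of the open set $S$), $p(U)$ is a complex submanifold of $V'$, and pulling back through the local biholomorphism $p_0|_V\colon V\to V'$ shows that $X\times\{0\}$ is a complex submanifold of $X_0$. The map $x\mapsto (x,0)$ is a biholomorphism between $X$ and this submanifold, so $X\times\{0\}\simeq X$.

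Finally I would treat the retraction $\sigma_X(x,v):=x$. The equality $\sigma_X(x,0)=x$ is immediate, so only holomorphy needs a comment. Using the chart above, a straightforward computation gives $p\circ\sigma_X|_V = \sigma\circ p_0|_V$ on $V$; since $p$ is locally biholomorphic and both $\sigma$ and $p_0$ are holomorphic, $\sigma_X$ is holomorphic near $(x_0,v_0)$. As $(x_0,v_0)\in X_0$ was arbitrary, $\sigma_X$ is a holomorphic retraction onto $X\times\{0\}\simeq X$. I do not foresee a real obstacle: the whole statement is a repackaging of the explicit local inverse $\rho$ provided in the proof of Lemma \ref{LemX0}, and the only mildly delicate point is checking that the submanifold structure on $X\times\{0\}$ coming from $X_0$ agrees with the one on $X$, which follows once one observes that $p_0$ sends $X\times\{0\}$ onto $M\cap V'$ by the chart $p|_U$.
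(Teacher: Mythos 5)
Your proposal is correct and follows exactly the route the paper intends: the paper states this Proposition as a direct corollary of Lemma \ref{LemX0} with no written proof, and your argument simply unpacks the explicit local inverse $\rho$ from that lemma. All three verifications are sound, including the slightly delicate identity $(X\times\{0\})\cap V=(p_0|_V)^{-1}(p(U))$ (which uses that $\sigma|_M=\id$ forces $v=0$) and the global relation $p\circ\sigma_X=\sigma\circ p_0$ giving holomorphy of the retraction.
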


\begin{Lemma}\label{LemXY}
Let $(X,p), (Y,q)\in\FR_c(M)$, $\varnothing\neq\CF\subset\CO(X)$, and let $\phi:(X,p)\too(Y,q)$ be an $\CF$-extension (over $M$).
Assume that $(X_0,p_0)$ and $(Y_0,q_0)$ are constructed according to Proposition \ref{PropX0}.
Put $X_0\ni(x,v)\overset{\phi_0}\tuu(\phi(x),v)\in Y\times\CC^N$, $\CF_0:=\{f\circ\sigma_X: f\in\CF\}\subset\CO(X_0)$.
Then $\phi_0:(X_0,p_0)\too(Y_0,q_0)$ is an $\CF_0$-extension (over $\CC^N$).
\end{Lemma}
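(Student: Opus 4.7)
The plan is to unpack what it means for $\phi_0:(X_0,p_0)\too(Y_0,q_0)$ to be an $\CF_0$-extension over $\CC^N$ and verify the three required properties one by one: (i) $\phi_0$ actually maps $X_0$ into $Y_0$; (ii) $q_0\circ\phi_0\equiv p_0$, so that $\phi_0$ is a morphism in $\FR(\CC^N)$; and (iii) every element of $\CF_0$ factors holomorphically through $\phi_0$.

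For (i), I will first check the easier statement $\phi_0(X_0)\subset\wdtl Y_0$: given $(x,v)\in X_0\subset\wdtl X_0$, the relations $p(x)+v\in S$ and $\sigma(p(x)+v)=p(x)$ together with $q\circ\phi\equiv p$ immediately force $(\phi(x),v)\in\wdtl Y_0$. The subtler point, and the only place where any care is needed, is to upgrade this to $\phi_0(X_0)\subset Y_0$, since $Y_0$ is only the connected component of $\wdtl Y_0$ containing $Y\times\{0\}$. I plan to handle this by a connectedness argument: $X_0$ is connected and $\phi_0$ is continuous, while $\phi_0(X\times\{0\})\subset\phi(X)\times\{0\}\subset Y\times\{0\}\subset Y_0$, so the image of $X_0$ necessarily lies in the same component. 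Step (ii) then follows by the direct calculation $q_0(\phi_0(x,v))=q(\phi(x))+v=p(x)+v=p_0(x,v)$.

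The substantive content is (iii). For each $f\in\CF$ I will use the hypothesis that $\phi$ is an $\CF$-extension to obtain $g=f^\phi\in\CO(Y)$ with $g\circ\phi=f$, and then propose $\wdtl g:=g\circ\sigma_Y\in\CO(Y_0)$ as the candidate extension of $f\circ\sigma_X$. Since $\sigma_Y(\phi(x),v)=\phi(x)$ by definition of the retraction from Proposition \ref{PropX0}, the identity $\wdtl g\circ\phi_0(x,v)=g(\phi(x))=f(x)=(f\circ\sigma_X)(x,v)$ is immediate, which gives $(f\circ\sigma_X)^{\phi_0}=\wdtl g$ and hence $\CF_0\subset\phi_0^\ast(\CO(Y_0))$. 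I do not anticipate any real obstacle; the only mildly delicate point is the connected-component argument in (i), after which everything else is a direct consequence of the constructions in Lemma \ref{LemX0} and Proposition \ref{PropX0}.
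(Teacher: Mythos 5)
Your proposal is correct and follows essentially the same route as the paper's proof: the same direct computation showing $\phi_0$ lands in $\wdtl Y_0$, the same connectedness observation (via $\phi_0(X\times\{0\})\subset Y\times\{0\}$) to conclude $\phi_0(X_0)\subset Y_0$, and the same choice $f^\phi\circ\sigma_Y$ as the extension of $f\circ\sigma_X$.
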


\begin{proof}
 First observe that $\phi_0: X_0\too\wdtl Y_0$ is well defined: $q(\phi(x))+v=p(x)+v\in S$ and
$\sigma(q(\phi(x))+v)=\sigma(p(x)+v)=p(x)=q(\phi(x))$. It is clear that $q_0\circ\phi_0=p_0$ and $\phi_0(X\times\{0\})\subset Y\times\{0\}$.
Thus $\phi_0(X_0)\subset Y_0$. Moreover, for each $f\in\CF$ we have $(f^\phi\circ\sigma_Y)\circ\phi_0=f^\phi\circ\phi=f=f\circ\sigma_X$.
\end{proof}

Let $\psi:(X_0,p_0)\too(Z,r)$ be an $\CF_0$-extension (over $\CC^N$). Put $Z^M:=r^{-1}(M)$. Observe that $(Z^M,r)\in\FR(M)$ and $\psi(X\times\{0\})\subset Z^M$. 
Let $Z^{M,\psi}$ be the connected component of $Z^M$ that contains $\psi(X\times\{0\})$.  
Then $\psi:(X,p)\too(Z^{M,\psi},r)$ is an $\CF$-extension (over $M$); recall that $X\simeq X\times\{0\}$.

\begin{Theorem}\label{ThmThThmStein}
Under the above notation if $\psi:(X_0,p_0)\too(Z,r)$ is the $\CF_0$-envelope of holomorphy (over $\CC^N$), 
then  $\psi:(X,p)\too(Z^{M,\psi},r)$ is the $\CF$-envelope of holomorphy (over $M$). Moreover, $Z^{M,\psi}$ is Stein.

Consequently, for any Riemann domain $(X,p)\in\FR_c(M)$ and for any family of functions $\varnothing\neq\CF\subset\CO(X)$, if $\phi:(X,p)\too(\wdht X,\wdht p)$ is the $\CF$-envelope of holomorphy 
(over $M$), then $\wdht X$ is Stein.
\end{Theorem}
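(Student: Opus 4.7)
The plan is to reduce the statement to the analogous one over $\CC^N$, exploiting the machinery set up in Lemma \ref{LemX0}, Proposition \ref{PropX0} and Lemma \ref{LemXY}, together with the classical Cartan--Thullen theorem over $\CC^N$ (cited as Theorem 1.10.4 in \cite{JarPfl2000}).

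First I would verify that $\psi\colon(X,p)\too(Z^{M,\psi},r)$ is actually an $\CF$-extension over $M$. The morphism identity $r\circ\psi=p$ on $X\simeq X\times\{0\}$ follows by restricting $r\circ\psi=p_0$ to $X\times\{0\}$ and using $p_0(x,0)=p(x)$. For the extension property, each $f\in\CF$ gives $f\circ\sigma_X\in\CF_0$ and hence extends to some $F\in\CO(Z)$ with $F\circ\psi=f\circ\sigma_X$ on $X_0$; restriction to $X\times\{0\}$ yields $F|_{Z^{M,\psi}}\circ\psi=f$ on $X$.

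Next I would establish the universal property. Let $\alpha\colon(X,p)\too(Y,q)$ be any $\CF$-extension over $M$. By Lemma \ref{LemXY}, after the construction of $(Y_0,q_0)$ via Proposition \ref{PropX0}, we obtain an $\CF_0$-extension $\alpha_0\colon(X_0,p_0)\too(Y_0,q_0)$ over $\CC^N$. Since $\psi$ is the $\CF_0$-envelope over $\CC^N$, there is a morphism $\gamma\colon(Y_0,q_0)\too(Z,r)$ with $\gamma\circ\alpha_0\equiv\psi$. For $(y,0)\in Y\times\{0\}$ we have $r(\gamma(y,0))=q_0(y,0)=q(y)\in M$, so $\gamma(Y\times\{0\})\subset r^{-1}(M)=Z^M$. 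Because $Y\times\{0\}\simeq Y$ is connected and $\gamma(\alpha(x),0)=\psi(x)\in Z^{M,\psi}$ for $x\in X$, the image $\gamma(Y\times\{0\})$ lies entirely in the component $Z^{M,\psi}$. Then $\beta:=\gamma|_{Y\times\{0\}}\colon(Y,q)\too(Z^{M,\psi},r)$ is the required morphism with $\beta\circ\alpha\equiv\psi$.

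For Steinness, Cartan--Thullen over $\CC^N$ gives that $Z$ is Stein. Since by the Remmert embedding we may assume $M$ is a closed complex submanifold of $\CC^N$, and $r\colon Z\too\CC^N$ is locally biholomorphic, $Z^M=r^{-1}(M)$ is a closed complex submanifold of $Z$; hence $Z^M$, and therefore the component $Z^{M,\psi}$, is Stein. The final ``consequently'' then follows from the uniqueness of the $\CF$-envelope up to isomorphism. The main technical point I expect to watch is the connected-component bookkeeping: making sure that $\gamma$ sends the connected set $Y\times\{0\}$ into $Z^{M,\psi}$ (and not into some other component of $Z^M$), which is exactly what the connectedness hypothesis $(Y,q)\in\FR_c(M)$ together with the definition of $Z^{M,\psi}$ is designed to guarantee.
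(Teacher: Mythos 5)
Your proposal is correct and follows essentially the same route as the paper: the extension property of $\psi$ on $X\times\{0\}$ is checked as in the paragraph preceding the theorem, the universal property is obtained exactly via Lemma \ref{LemXY} and the $\CF_0$-envelope over $\CC^N$ followed by the restriction of the resulting morphism to $Y\times\{0\}$ (you spell out the component bookkeeping that the paper leaves as ``it remains to observe''), and Steinness comes from Cartan--Thullen over $\CC^N$ plus the fact that $Z^M=r^{-1}(M)$ is a closed submanifold of the Stein manifold $Z$ (the paper realizes it as the zero set of the functions $g_j\circ r$, which amounts to the same thing).
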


\begin{proof}
We only need to show that the extension $\psi:(X,p)\too(Z^{M,\psi},r)$
is maximal. Suppose that $\phi:(X,p)\too(Y,q)$ is an $\CF$-extension (over $M$).
Then, by Lemma \ref{LemXY}, $\phi_0:(X_0,p_0)\too(Y_0,q_0)$ is an $\CF_0$-extension (over $\CC^N$).
Thus, there exists a morphism $\tau: (Y_0,q_0)\too(Z,r)$ such that $\tau\circ\phi_0=\psi$.
It remains to observe that $\tau(Y\times\{0\})\subset Z^{M,\psi}$.

We know that $Z$ is a Stein manifold (cf.~\cite{JarPfl2000}, Cartan--Thullen Theorem 1.10.4). 
Let $M=\{\zeta\in\CC^N: g_j(\zeta)=0,\;j=1,\dots,k\}$, where $g_1,\dots,g_k\in\CO(\CC^N)$. Then $Z^M=\{z\in Z: g_j\circ r(z)=0,\;j=1,\dots,k\}$.
Hence $Z^M$ is a submanifold of $Z$ and, therefore, $Z^M$ is Stein. Consequently, $Z^{M,\psi}$ is Stein.
\end{proof}

\section{$\CF$-envelopes in the sense of complex manifolds}\label{sectionEnvOverComMan} 
The aim of this section is to recall a more general notion of the $\CF$-envelope of holomorphy (cf.~\cite{Ker1959}, \cite{Vig1982}).
Let $\ES$ be the family of all pairs $(Y,\CG)$ such that:

\bu $Y$ is a connected $n$-dimensional complex manifold,

\bu $\CG\subset\CO(Y)$, 

\bu for every $a\in Y$ there exist a $g\in\CG^n$ and an open neighborhood $U$ of $a$ such that
$g(U)$ is open and $g|_U:U\too g(U)$ is biholomorphic.

One may prove that if $(Y,\CG)$ in $\ES$, then $Y$ is countable at infinity (cf.~\cite{Gra1955}).
Observe that if $M$ is a connected submanifold of $\CC^N$, $(Y,q)\in\FR_c(M)$, and $\CG\subset\CO(Y)$ is such that $q\in\CG^N$, then $(Y,\CG)\in\ES$.
We fix a pair $(X,\CF)\in\ES$.
Let $(Y,\CG)$, $(Z,\CH)\in\ES$. We say that a holomorphic mapping
$\phi:Y\too Z$ is a \emph{$C$-morphism} if $\phi^\ast|_{\CH}:\CH\too\CG$ is bijective. Observe that $\phi$ must be locally biholomorphic. 
We write ``$\phi:(Y,\CG)\too(Z,\CH)$ is a $C$-morphism''. Note that if $\phi:(X,p)\too(Y,q)$ is an $\CF$-extension in the sense of Riemann domains over $M$ with $p\in\CF^N$, then
$\phi:(X,\CF)\too(Y,\CF^\phi)$ is a $C$-morphism.

We say that a $C$-morphism $\phi:(X,\CF)\too(\wdtl X,\wdtl{\CF})$ is an \emph{$\CF_C$-envelope of holomorphy} if for every $C$-morphism $\psi:(X,\CF)\too(Y,\CG)$ there exists a holomorphic
mapping $\sigma:Y\too\wdtl X$ such that $\sigma\circ\psi\equiv\phi$. Notice that in fact $\sigma:(Y,\CG)\too(\wdtl X,\wdtl{\CF})$ is a $C$-morphism. 
Such an $\CF_C$-envelope of holomorphy is uniquely determined up to a $C$-isomorphism. 
It is clear that the $\CF_C$-envelope has no defects of the $\CF$-envelope in the sense of Riemann domains, i.e.~it depends only on $\CF$.

\begin{Theorem}[cf.~\cite{Vig1982}]
For arbitrary $(X,\CF)\in\ES$ the $\CF_C$-envelope of holomorphy exists. 
\end{Theorem}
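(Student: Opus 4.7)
The plan is to construct $\wdtl X$ by a Thullen-style gluing of germs, adapted to the projectionless setting of $\ES$. First, I would secure a set-theoretic bound: every $(Y,\CG)\in\ES$ is countable at infinity (as noted in the paper, via \cite{Gra1955}), so the target of any $C$-morphism from $(X,\CF)$ has cardinality at most $\mathfrak{c}$, and the collection of $C$-extensions of $(X,\CF)$ is essentially a set (up to $C$-isomorphism). I would then form $\wdtl X$ as the set of equivalence classes of pairs $(\phi,y)$, where $\phi:(X,\CF)\too(Y,\CG)$ ranges over $C$-extensions and $y\in Y$, declaring $(\phi_1,y_1)\sim(\phi_2,y_2)$ iff there exist a common $C$-extension $(X,\CF)\too(Z,\CH)$ and $C$-morphisms $\tau_i:(Y_i,\CG_i)\too(Z,\CH)$ with $\tau_1\circ\phi_1=\tau_2\circ\phi_2$ and $\tau_1(y_1)=\tau_2(y_2)$.

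Next, I would equip $\wdtl X$ with a complex structure using $\CF$. At a point $a=[(\phi,y)]$, the defining property of $\ES$ supplies $g\in\CF^n$ with $g^\phi$ locally biholomorphic at $y$; a basic chart carries a small neighborhood of $g^\phi(y)\in\CC^n$ biholomorphically onto $\{[(\phi,y')]:y'\in U\}$, with compatibility between different representatives guaranteed by $\sim$. The rule $\wdtl f([(\phi,y)]):=f^\phi(y)$ then yields a well-defined family $\wdtl\CF\subset\CO(\wdtl X)$, and the canonical map $X\ni x\tuu[(\id_X,x)]\in\wdtl X$ is a $C$-morphism. The universal property is almost formal: given any $C$-extension $\psi:(X,\CF)\too(Y,\CG)$, the map $y\tuu[(\psi,y)]$ factors $\psi$ through the canonical map, and uniqueness up to $C$-isomorphism is standard.

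The main obstacle is showing that $\wdtl X$ is Hausdorff (second countability then follows from the same Grauert-type result). The difficulty exactly parallels that of the classical Thullen construction over $\CC^n$: a priori two inequivalent germs might fail to be separable by open sets. I would argue by contradiction. If $[(\phi_1,y_1)]\neq[(\phi_2,y_2)]$ admit no disjoint neighborhoods, I would use the $\ES$ axiom—after passing to a common enlargement of $Y_1,Y_2$ if needed—to choose an $n$-tuple $g\in\CF^n$ whose pushforward is a local biholomorphism near both representatives. Non-separation would force the two germ-neighborhoods to agree on an open set under the $g$-chart, which I would then use to amalgamate $Y_1$ and $Y_2$ along this overlap into a single $C$-extension that actually identifies $y_1$ and $y_2$, contradicting inequivalence. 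This separation step is the delicate heart of the argument and is precisely what is carried out in \cite{Vig1982}.
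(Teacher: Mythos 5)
The paper offers no proof of this theorem: it is quoted from \cite{Vig1982} (the bracketed ``cf.'' is the whole argument), so there is nothing internal to compare your attempt against. On its own merits, your outline is the right general strategy --- the germ/direct-limit construction in the spirit of Cartan--Thullen is exactly what Vigu\'e's method is, and the identification of Hausdorffness as the crux is correct.

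However, as a proof the attempt has a genuine gap, and you name it yourself: the separation step is asserted, not carried out, and it is precisely the point where the projectionless setting of $\ES$ differs from the classical Riemann-domain case. Concretely, the axiom defining $\ES$ only supplies, for each point $a$, \emph{some} tuple $g\in\CG^n$ that is biholomorphic near $a$; for two non-separated classes $[(\phi_1,y_1)]\neq[(\phi_2,y_2)]$ there is no a priori reason that a single $g\in\CF^n$ is a local biholomorphism near both representatives (the Jacobian of the tuple that works near $y_1$ may degenerate at $y_2$, even though points accumulating at both lie in the set where it is invertible), so the ``common chart'' on which you want to amalgamate $Y_1$ and $Y_2$ is not available by fiat. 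A second soft spot is the relation $\sim$ itself: transitivity requires amalgamating two common $C$-extensions into a third, which is essentially the gluing problem the construction is meant to solve and must be argued (or the equivalence must be defined directly on germs along continuation paths). Since both delicate points are deferred to \cite{Vig1982}, the proposal is an outline of the known construction rather than a proof.
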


\section{Main result}\label{sectionMainResult}

Let $(X,p)\in\FR_c(M)$, where $M$ is a connected submanifold of $\CC^N$. Let $\CF\subset\CO(X)$ be such that $p\in\CF^N$. 
Assume that $\phi:(X,\CF)\too(\wdtl X,\wdtl{\CF})$ is the $\CF_C$-envelope of holomorphy. 

Let $\wdtl p\in\wdtl{\CF}^N$ be such that $\wdtl p\circ\phi\equiv p$; note that $\wdtl p:\wdtl X\too M$.
Put $Z_p:=\{a\in\wdtl X: \wdtl p$ is not biholomorphic near $a\}$. Notice that $Z_p$ is an analytic subset of $\wdtl X$ with $\dim Z_p\leq n-1$.

\begin{Theorem}\label{ThmMain}
Under the above notation we have:
\begin{myenumerate}
\item $\phi: (X,p)\too (\wdtl X\setminus Z_p,\wdtl p)$ is the $\CF$-envelope of holomorphy (over $M$).

\item $Z_p=\varnothing$ iff the $\CF$- and $\CF_C$-envelope of holomorphy coincide. 

\item If $\CF=\CO(X)$, then the $\CO(X)$- and $\CO(X)_C$-envelope of holomorphy coincide.
\end{myenumerate}
\end{Theorem}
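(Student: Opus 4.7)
For part (a), my plan is to extract maximality directly from the universal property of the $\CF_C$-envelope. First, $\phi$ is a $C$-morphism and hence locally biholomorphic, and $p=\wdtl p\circ\phi$ is locally biholomorphic by hypothesis, so $\wdtl p$ is locally biholomorphic at every point of $\phi(X)$; consequently $\phi(X)\subset\wdtl X\setminus Z_p$. Because $Z_p$ has complex codimension $\geq 1$, the set $\wdtl X\setminus Z_p$ is connected and $(\wdtl X\setminus Z_p,\wdtl p)\in\FR_c(M)$, and $\phi:(X,p)\too(\wdtl X\setminus Z_p,\wdtl p)$ is tautologically an $\CF$-extension: each $f\in\CF$ has the form $\wdtl f\circ\phi$ with $\wdtl f\in\wdtl{\CF}$. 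For maximality, I would take an arbitrary $\CF$-extension $\psi:(X,p)\too(Y,q)$ over $M$ and regard it as a $C$-morphism $\psi:(X,\CF)\too(Y,\CF^\psi)$; the universal property then yields $\sigma:(Y,\CF^\psi)\too(\wdtl X,\wdtl{\CF})$ with $\sigma\circ\psi\equiv\phi$. Both $\wdtl p\circ\sigma$ and $q$ lie in $(\CF^\psi)^N$ and pull back via $\psi$ to $p$; since $Y$ is connected, $\psi^\ast$ is injective, so $\wdtl p\circ\sigma\equiv q$. Thus $\sigma:(Y,q)\too(\wdtl X,\wdtl p)$ is a Riemann-domain morphism, and local biholomorphicity of $q$ puts $\sigma(Y)$ into $\wdtl X\setminus Z_p$.

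Part (b) then follows immediately: the targets of the $\CF$- and $\CF_C$-envelopes are $\wdtl X\setminus Z_p$ and $\wdtl X$ respectively, and these coincide exactly when $Z_p=\varnothing$. For (c), I would assume $\CF=\CO(X)$ and argue by contradiction that $Z_p\neq\varnothing$ is impossible. By (a) and Theorem~\ref{ThmThThmStein}, the manifold $\wdht X:=\wdtl X\setminus Z_p$ is Stein; the $\CO(X)$-extension property from (a) combined with connectedness of $\wdht X$ makes $\phi^\ast:\CO(\wdht X)\too\CO(X)$ bijective, so $\phi:(X,\CO(X))\too(\wdht X,\CO(\wdht X))$ is a $C$-morphism. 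Applying the universal property of $\phi:(X,\CO(X))\too(\wdtl X,\wdtl{\CF})$ to this map produces $\sigma:\wdht X\too\wdtl X$ with $\sigma\circ\phi\equiv\phi$; the inclusion $\wdht X\hookrightarrow\wdtl X$ satisfies the same identity, so by the identity principle on the open set $\phi(X)$ the map $\sigma$ is the inclusion. Hence the inclusion itself is a $C$-morphism, i.e.\ every $g\in\CO(\wdht X)$ extends to a function in $\wdtl{\CF}\subset\CO(\wdtl X)$.

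The main obstacle is to contradict this last extension property, which I would do via holomorphic convexity of the Stein manifold $\wdht X$. Given any $a\in Z_p$, pick a sequence $(x_k)\subset\wdht X$ with $x_k\to a$ in $\wdtl X$; after passing to a subsequence, $(x_k)$ is closed and discrete in $\wdht X$, since its only possible limit in $\wdtl X$ lies outside $\wdht X$. Holomorphic convexity of $\wdht X$ then yields $F\in\CO(\wdht X)$ with $|F(x_k)|\to\infty$, and such $F$ admits no continuous, a fortiori no holomorphic, extension across $a$. This contradicts the preceding paragraph, so $Z_p=\varnothing$, completing (c).
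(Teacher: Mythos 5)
Your argument is correct, and it reaches the same conclusion by a genuinely different route in two places. For (a), the paper does not verify the universal property directly: it invokes the Thullen existence theorem to produce the abstract envelope $\wdht\phi:(X,p)\too(\wdht X,\wdht p)$ and then constructs mutually inverse morphisms $\sigma:(\wdht X,\CF^\phi)\too(\wdtl X,\wdtl\CF)$ and $\tau:(\wdtl X\setminus Z_p,\wdtl p)\too(\wdht X,\wdht p)$, using the identity principle together with $\wdtl p\circ\sigma\equiv\wdht p$ to see that $\sigma(\wdht X)$ misses $Z_p$. Your direct verification --- feeding an arbitrary $\CF$-extension $\psi$ into the $\CF_C$-universal property and deducing $\wdtl p\circ\sigma\equiv q$ from injectivity of $\psi^\ast$ --- is equivalent and has the small advantage of not needing the existence theorem for the Riemann-domain envelope. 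For (c), both proofs rest on the same two pillars, namely Steinness of $\wdht X:=\wdtl X\setminus Z_p$ (Theorem \ref{ThmThThmStein}) and the extension property $\CO(\wdtl X)|_{\wdht X}=\CO(\wdht X)$ (which you obtain neatly by showing the inclusion is forced to be the $C$-morphism $\sigma$ given by the universal property); they diverge at the endgame: the paper places a relatively compact neighbourhood $U$ of $a\in Z_p$ inside $\wdht K_{\CO(\wdtl X)}$ for a compact $K\subset\wdht X$ (citing \cite{JarPfl2000}, Remark 1.4.5(l)) and contradicts compactness of $\wdht K_{\CO(\wdht X)}$, whereas you take a closed discrete sequence $x_k\to a$ in $\wdht X$ and a function unbounded along it; your version is more self-contained, at the cost of invoking the standard characterization of holomorphic convexity by unbounded functions on closed discrete sets. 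One caveat on (b): the nontrivial direction is that coincidence of the envelopes \emph{up to isomorphism} forces $Z_p=\varnothing$; as in the paper, the point is that any $C$-morphism $\sigma:\wdtl X\too\wdtl X\setminus Z_p$ with $\sigma\circ\phi\equiv\phi$ must be the identity by the identity principle, and your ``follows immediately'' silently absorbs exactly this step.
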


\begin{proof}
(a)
Let $\wdht\phi:(X,p)\too(\wdht X,\wdht p)$ be the $\CF$-envelope of holomorphy in the sense of Riemann domains over $M$. 
Then $\wdht\phi:(X,\CF)\too(\wdht X,\CF^\phi)$ is a $C$-morphism.
Consequently, there exists a $C$-morphism $\sigma:(\wdht X,\CF^\phi)\too(\wdtl X,\wdtl{\CF})$ such that $\sigma\circ\wdht\phi\equiv\phi$. 

On the other hand, $(\wdtl X\setminus Z_p,\wdtl p)$ is a Riemann domain over $M$. Since $\wdtl p\circ\phi\equiv p$, we get $\phi(X)\subset\wdtl X\setminus Z_p$. 
Consequently, $\phi:(X,p)\too(\wdtl X\setminus Z_p,\wdtl p)$ is an $\CF$-extension. Thus, there exists a morphism 
$\tau:(\wdtl X\setminus Z_p,\wdtl p)\too(\wdht X,\wdht p)$ such that $\tau\circ\phi\equiv\wdht\phi$. 
Then $(\sigma\circ\tau)\circ\phi\equiv\sigma\circ\wdht\phi\equiv\phi$, which by the identity principle gives $\sigma\circ\tau=\id$.
Moreover, $\wdtl p\circ\sigma\circ\wdht\phi\equiv\wdtl p \circ\phi\equiv p\equiv\wdht p\circ\wdht\phi$. Consequently, $\wdtl p\circ\sigma\equiv\wdht p$, which implies that 
$\sigma(\wdht X)\subset\wdtl X\setminus Z_p$.
Hence $\tau\circ\sigma=\id$ and, therefore, $\tau$ is an isomorphism.

(b) If $\phi:(X,\CF)\too(\wdtl X\setminus Z_p,\CF^{\wdtl\phi})$ is the $\CF_C$-envelope of holomorphy, then there exists a holomorphic mapping
$\sigma:\wdtl X\too\wdtl X\setminus Z_p$ such that $\sigma\circ\phi\equiv\phi$. Then $\sigma=\id$, which gives $Z_p=\varnothing$.

(c) Put $\wdht X:=\wdtl X\setminus Z_p$. We have $\CO(\wdtl X)|_{\wdht X}=\CO(\wdht X)$. 
In particular, the spaces $\CO(\wdtl X)$ and $\CO(\wdht X)$ endowed with the Fr\'echet topologies of locally uniform convergence are isomorphic.

We known that $(\wdht X,\wdtl p)$ is Stein (cf.~Theorem \ref{ThmThThmStein}).
In particular, $\wdht X$ is holomorphically convex, i.e.~for every compact $K\subset\wdht X$ its holomorphically convex hull
$\wdht K_{\CO(\wdht X)}:=\{x\in\wdht X: \forall_{g\in\CO(\wdht X)}: |g(x)|\leq\sup_K|g|\}$ is compact.

Suppose that $Z_p\neq\varnothing$ and let $a\in Z_p$. Let $U$ be a relatively compact open neighborhood of $a$. Then there exists a compact set $K\subset\wdht X$ 
such that $U\subset\wdht K_{\CO(\wdtl X)}$ (cf.~\cite{JarPfl2000}, Remark 1.4.5(l)). 
Thus $U\setminus Z_p\subset\wdht K_{\CO(\wdtl X)}\setminus Z_p=\wdht K_{\CO(\wdht X)}\subset\subset\wdht X$.
Consequently, $a\in U\subset\overline{U\setminus Z_p}\subset\wdht X$ --- a contradiction.
\end{proof}

\begin{Theorem}\label{ThmBihStein}
Let $(X,p)$, $(X,q)\in\FR_c(M)$ be such that $\str(X,p)=\str(X,q)$ and let $\CF\subset\CO(X)$ be such that $p, q\in\CF^N$. 
Let $\phi_p:(X,p)\too(\wdht X_p,\wdht p)$ and $\phi_q:(X,p)\too(\wdht X_q,\wdht q)$ be $\CF$-envelopes of holomorphy (over $M$).
Let $\phi:(X,\CF)\too(\wdtl X,\wdtl\CF)$ be the $\CF_C$-envelope of holomorphy and let $Z_p$ and $Z_q$ be as in Theorem \ref{ThmMain}.
Then the following conditions are equivalent:
\begin{myrenumerate}
\item there exists a biholomorphic mapping $\tau:\wdht X_p\too\wdht X_q$ such that $\tau\circ\phi_p\equiv\phi_q$;
\item $Z_p=Z_q$.
\end{myrenumerate}
\end{Theorem}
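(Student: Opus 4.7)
My plan is to reduce the problem to the $\CF_C$-envelope by embedding both $\wdht X_p$ and $\wdht X_q$ as explicit open subsets of the single complex manifold $\wdtl X$, and then to exploit the identity principle for holomorphic maps.

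First, applying Theorem \ref{ThmMain}(a) to $p$ and to $q$ separately, I get that both $\phi:(X,p)\too(\wdtl X\setminus Z_p,\wdtl p)$ and $\phi:(X,q)\too(\wdtl X\setminus Z_q,\wdtl q)$ are $\CF$-envelopes of holomorphy over $M$. By the uniqueness of the $\CF$-envelope up to Riemann-domain isomorphism, there exist isomorphisms $\alpha_p:(\wdht X_p,\wdht p)\too(\wdtl X\setminus Z_p,\wdtl p)$ and $\alpha_q:(\wdht X_q,\wdht q)\too(\wdtl X\setminus Z_q,\wdtl q)$ satisfying $\alpha_p\circ\phi_p\equiv\phi$ and $\alpha_q\circ\phi_q\equiv\phi$. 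The theorem is thereby reduced to comparing the two open subsets $\wdtl X\setminus Z_p$ and $\wdtl X\setminus Z_q$ inside the common ambient manifold $\wdtl X$.

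The implication (ii)$\Rightarrow$(i) is then immediate: if $Z_p=Z_q$, I take $\tau:=\alpha_q^{-1}\circ\alpha_p$, which is a biholomorphism satisfying $\tau\circ\phi_p\equiv\alpha_q^{-1}\circ\phi\equiv\phi_q$.

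For (i)$\Rightarrow$(ii), given $\tau$ I conjugate to define $\tau':=\alpha_q\circ\tau\circ\alpha_p^{-1}:\wdtl X\setminus Z_p\too\wdtl X\setminus Z_q$; this is a biholomorphism with $\tau'\circ\phi\equiv\phi$. Composing with the inclusion $\wdtl X\setminus Z_q\hookrightarrow\wdtl X$, I view $\tau'$ as a holomorphic map into $\wdtl X$ which agrees with the inclusion $\iota:\wdtl X\setminus Z_p\hookrightarrow\wdtl X$ on the nonempty open set $\phi(X)\subset\wdtl X\setminus Z_p$ (open because $\phi$ is locally biholomorphic). Since $\wdtl X$ is connected and $Z_p$ is analytic of codimension $\geq 1$, $\wdtl X\setminus Z_p$ is also connected, so the identity principle for holomorphic maps into a Hausdorff complex manifold forces $\tau'\equiv\iota$. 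Consequently $\wdtl X\setminus Z_p\subset\wdtl X\setminus Z_q$, and applying the same argument to $\tau'^{-1}$ yields the reverse inclusion, whence $Z_p=Z_q$. The only step that requires any care is this identity-principle argument—routine but worth spelling out, since it is where the hypothesis $p,q\in\CF^N$ pays off by letting both $\wdht X_p$ and $\wdht X_q$ sit inside the common $\CF_C$-envelope $\wdtl X$ in the first place.
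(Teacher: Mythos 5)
Your proof is correct and follows essentially the same route as the paper: reduce via Theorem \ref{ThmMain}(a) to the models $\wdtl X\setminus Z_p$ and $\wdtl X\setminus Z_q$ inside $\wdtl X$, take $\tau=\id$ for (ii)$\Rightarrow$(i), and use the identity principle to force $\tau=\id$ for (i)$\Rightarrow$(ii). You merely spell out the identity-principle step that the paper states in one line.
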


\begin{proof}
By Theorem \ref{ThmMain} we may assume that $\phi_p=\phi_q=\phi$, $(\wdht X_p,\wdht p)=(\wdtl X\setminus Z_p,\wdtl p)$, and $(\wdht X_q,\wdht q)=(\wdtl X\setminus Z_q,\wdtl q)$. 
If $Z_p=Z_q$, then we take $\tau:=\id$. Conversely, if $\tau:\wdtl X\setminus Z_p\too\wdtl X\setminus Z_q$ is such that $\tau\circ\phi\equiv\phi$, then $\tau=\id$ and hence $Z_p=Z_q$.
\end{proof}

\section{Proof of Example 1.3}\label{sectionExmpl}
In view of Theorem \ref{ThmMain}(a) the only problem is to prove that $\id:(\DD_\ast,\CH^\infty(\DD_\ast))\too(\DD,\CH^\infty(\DD))$ is the $\CH^\infty(\DD_\ast)_C$-envelope of holomorphy. It is clear that it is a 
$C$-morphism. Let $\phi:(\DD_\ast,\CH^\infty(\DD_\ast))\too(\wdtl X,\wdtl\CF)$ be the $\CH^\infty(\DD_\ast)_C$-envelope of holomorphy. Then there exists a $C$-morphism 
$\sigma:(\DD,\CH^\infty(\DD))\too(\wdtl X,\wdtl\CF)$ such that $\sigma=\phi$ on $\DD_\ast$. Let $\wdtl p\in\wdtl\CF$ be such that $\wdtl p\circ\sigma\equiv\id$. 
Observe that $\wdtl p(\wdtl X)\subset\DD$. In fact, since $\wdtl p\not\equiv\const$, we only need to show that $\wdtl p(\wdtl X)\subset\overline\DD$.
Suppose that $z_0\in\wdtl p(\wdtl X)\setminus\overline\DD$. Put $f(z):=1/(z-z_0)$. Then $f\in\CH^\infty(\DD)$. Let $\wdtl f\in\wdtl\CF$ be such that $\wdtl f\circ\sigma\equiv f$. 
Thus, by the identity principle, $\wdtl f\cdot(\wdtl p-z_0)\equiv1$ --- a contradiction. 

Finally, $\sigma:\DD\too\wdtl X$ is biholomorphic and $\sigma^{-1}=\wdtl p$.  

\section{Proof of Example 1.3}\label{sectionExmpl-2}
\footnote{To see that $\wdtl X$ is connected it suffices to observe that $\CC\setminus\{0\}\ni\lambda\tuu(\frac12(\lambda+1/\lambda), \frac{i}2(\lambda-1/\lambda))\in\wdtl X$ 
is a global parametrization.}
It is clear that $\phi:(X,\CF)\too(\wdtl X,\wdtl\CF)$ is a $C$-morphism. Suppose that $\phi^0:(X,\CF)\too(X^0,\CF^0)$ is the $\CF_C$-envelope of holomorphy and let $\sigma:(\wdtl X,\wdtl\CF)\too(X^0,\CF^0)$
be a $C$-morphism such that $\sigma\circ\phi\equiv\phi^0$. Let $f^0_j\in\CF^0$ be such that $f^0_j\circ\phi^0\equiv f_j$, $j=1,2$. Then $((f^0_1)^2+(f^0_2)^2)\circ\phi^0\equiv1$, which shows that
$f^0:=(f^0_1,f^0_2):X^0\too\wdtl X$. Moreover, $\sigma\circ f^0\circ\phi^0=\sigma\circ\phi=\phi^0$. Consequently, $\sigma\circ f^0\equiv\id$, which implies that $\sigma$ is a $C$-isomorphism.

\section{Proof of Theorem 1.4}\label{sectionThmIsNotStein}
Let  $Y:=\{(z_1,\dots,z_{n+1})\in\CC^{n+1}: z_1^2+\dots+z_{n+1}^2=0\}$, $Y_0:=Y\setminus\{0\}$. Observe that $Y_0$ is a connected
\footnote{To see that $Y_0$ is connected we may argue as follows. Since $Y_0$ is a $\CC_\ast$-cone ($\CC_\ast:=\CC\setminus\{0\}$), it suffices to show that the any two points from 
$Q:=Y_0\cap\partial\BB_{n+1}$ may be joined
in $Y_0$ with a continuous curve, where $\BB_k:=\{z\in\CC^k: \|z\|<1\}$ is the unit Euclidean ball. We have $Q=\{x+iy\in\RR^{n+1}+i\RR^{n+1}: \|x\|=\|y\|=1/\sqrt{2},\;\sprod{x}{y}=0\}$. 
Any orthogonal operator $A\in\OO(n+1,\RR)$ acts on $Q$ according to the formula
$x+iy\tuu Ax+iAy$. Since the special orthogonal group $\SS\LL(n+1,\RR)$ is connected, each point $a\in Q$ may be joined in $Q$ with a point $b$ of the form $(b',0)+i(0',b_n)$ with 
$\|b'\|=|b_n|=1/\sqrt{2}$. Taking a suitable rotation $\zeta\in\partial\DD$ we may join $b$ in $Y_0$ with a point $c=(c',c_n):=\zeta b$ such that $\|c'\|=c_n=1/\sqrt{2}$. It remains to use the fact that
$\partial\BB_n(1/\sqrt{2})$ is connected.} 
$n$-dimensional complex manifold 
Let $X_0:=Y_0\setminus M_{n+1}$, where $M_{n+1}:=\{(z_1,\dots,z_{n+1})\in Y_0: z_{n+1}=0\}$, $p_0(z_1,\dots,z_{n+1}):=(z_1,\dots,z_n)$. Obviously, $M_{n+1}$ is a one-codimensional analytic subset of $Y_0$. 
Then $(X_0,p_0)\in\FR_c(\CC^n)$. Put
$\CF_0:=\CH^\infty(X_0)\cup\{z_j|_{X_0}: j=1,\dots,n+1\}$. 
\halfskip

(a) First we will prove that $\id:(X_0,\CF_0)\too(Y_0,\CG)$ is the $(\CF_0)_C$-envelope of holomorphy, where $\CG:=\CH^\infty(Y_0)\cup\{z_j|_{Y_0}: j=1,\dots,n+1\}$. 

By the Riemann removable singularities theorem we see that $\id:(X_0,\CF_0)\too(Y_0,\CG)$ is the $(\CF_0)_C$-extension. 
Let $\phi_0:(X_0,\CF_0)\too(\wdtl X_0,\wdtl\CF_0)$ be the $(\CF_0)_C$-envelope of holomorphy. Observe that 
$\wdtl\CF_0=\CH^\infty(\wdtl X_0)\cup\{F_1,\dots,F_{n+1}\}$, where $F_j\in\CO(\wdtl X_0)$ is such that $F_j\circ\phi_0\equiv z_j|_{X_0}$, $j=1,\dots,n+1$. 
Let $\sigma:(Y_0,\CG)\too(\wdtl X,\wdtl\CF)$ be a $C$-morphism with $\sigma=\phi_0$ on $X_0$. Clearly, $F_j\circ\sigma\equiv z_j|_{Y_0}$, $j=1,\dots,n+1$. 
Let $N:=\{x\in\wdtl X_0: F_j(x)=0,\;j=1,\dots,n+1\}$; $N$ is an analytic subset of $\wdtl X_0$ with $\dim N\leq n-1$. 
Put $F:=(F_1,\dots,F_{n+1}):\wdtl X_0\too\CC^{n+1}$. Observe that $F:\wdtl X_0\too Y$. It is clear that $\sigma:Y_0\too\wdtl X_0\setminus N$ is
biholomorphic ($\sigma^{-1}=F$). Thus, it remains to show that $N=\varnothing$. Suppose that $a\in N$. By the definition of the class $\ES$ there exist an open neighborhood $U\subset\wdtl X_0$ and
a mapping $\wdtl f^0=(\wdtl f^0_1,\dots,\wdtl f^0_n)\in\wdtl\CF^n$ such that $\wdtl f^0|_U:U\too\wdtl f^0(U)$ is biholomorphic. There are essentially the following three possibilities:

\bu $\wdtl f^0_j\in\CH^\infty(\wdtl X)$, $j=1,\dots,n$.

Since $Y_0$ is a $\CC_\ast$-cone, we easily conclude that for every $f\in\CH^\infty(Y_0)$ 
we get $f(\lambda b)=f(b)$, $\lambda\in\CC_\ast$, $b\in Y_0$. Consequently, if $\wdtl f\in\CH^\infty(\wdtl X_0)$, then $\wdtl f(\sigma(\lambda b))=\wdtl f(\sigma(b))$, $\lambda\in\CC_\ast$, $b\in Y_0$. 
In particular, $\wdtl f^0(\sigma(\lambda b))=\wdtl f^0(\sigma(b))$, $\lambda\in\CC_\ast$, $b\in Y_0$. Taking $b\in\sigma^{-1}(U\setminus N)$ we get a contradiction with injectivity of $\wdtl f^0|_U$.

\bu $\wdtl f^0_1=F_1,\dots,\wdtl f^0_s=F_s$, $\wdtl f^0_{s+1},\dots,\wdtl f^0_n\in\CH^\infty(\wdtl X_0)$ for some $1\leq s\leq n-1$.

Using the above argument we see that if $b\in Y_0$ is such that $b_1=\dots=b_s=0$, then $\wdtl f^0(\sigma(\lambda b))=\wdtl f^0(\sigma(b))$, $\lambda\in\CC_\ast$. Thus, it is enough to show that
$(U\setminus N)\cap N_1\cap\dots\cap N_s\neq\varnothing$, where $N_j:=\{x\in\wdtl X_0: F_j(x)=0\}$, $j=1,\dots,n+1$. Suppose the contrary. 
Then $N_1\cap\dots\cap N_s\cap U\subset N_{s+1}\cap\dots\cap N_{n+1}$. Let $W:=\sigma^{-1}(U\setminus N)\neq\varnothing$. We have $\{z\in W: z_1=\dots=z_s=0\}\subset\{z\in W: z_{s+1}=\dots=z_{n+1}=0\}$.
Since $n+1-s\geq2$, we get a contradiction.

\bu  $\wdtl f^0_j=F_j$, $j=1,\dots,n$.

Using local complex coordinates $(\zeta_1,\dots,\zeta_n)$ in a neighborhood of a we have $\det[\pder{F_j}{\zeta_k}]_{j,k=1,\dots,n}\neq0$. Since $F_1^2+\dots+F_{n+1}^2\equiv0$, we have
$F_1\pder{F_1}{\zeta_k}+\dots+F_n\pder{F_n}{\zeta_k}=-F_{n+1}\pder{F_{n+1}}{\zeta_k}$, $k=1,\dots,n$. Thus, by Cramer's formulas, $F_j=\Phi_jF_{n+1}$ in a neighborhood of $a$,  
where $\Phi_j$ is holomorphic, $j=1,\dots,n$. Consequently, $\pder{F_j}{\zeta_k}(a)=\Phi_j(a)\pder{F_{n+1}}{\zeta_k}(a)$, $j,k=1,\dots,n$, which gives a contradiction.

\halfskip

(b) Now we will prove that for every circular compact $K\subset Y_0$ we have $\DD_\ast\cdot K\subset\wdht K_{\CO(Y_0)}$, which directly implies that $Y_0$  is not holomorphically convex.
It suffices to prove that for any $f\in\CO(Y_0)$ and $b\in Y_0$ the function $\CC_\ast\ni\lambda\overset{f_b}\tuu f(\lambda b)$ is bounded near $\lambda=0$ 
(and consequently extends holomorphically to $\CC$). Indeed, then for every circular compact set $K\subset Y_0$, $f\in\CO(Y_0)$, $b\in K$ and $\lambda\in\DD_\ast$ we have
$$
|f(\lambda b)|=|f_b(\lambda)|\leq\max_{\partial\DD}|f_b|=\max_{\lambda\in\partial\DD}|f(\lambda b)|\leq\max_K|f|.
$$

Fix an $f$. 
Let $M:=\{(w_1,\dots,w_n)\in\CC^n: w_1^2+\dots+w_n^2=0\}$; $M$ is a one-codimensional analytic subset of $\CC^n$. Define 
\begin{multline*}
\CC\times(\CC^n\setminus M)\ni(\xi,w)\tuu\Big(\xi-f\big(w,\sqrt{-(w_1^2+\dots+w_n^2)}\big)\Big)\Big(\xi-f\big(w,-\sqrt{-(w_1^2+\dots+w_n^2)}\big)\Big)\\
=:\xi^2+B(w)\xi+C(w),
\end{multline*}
where $B, C\in\CO(\CC^n\setminus M)$. Observe that $B, C$ are locally bounded in $\CC^n\setminus\{0\}$ --- if $w^0\in M\setminus\{0\}$ and $\CC^n\setminus M\ni w^s\too w^0$, then $B(w^s)\too-2f(w^0,0)$ and
$C(w^s)\too f^2(w^0,0)$. Thus we may first assume that $B,C\in\CO(\CC^n\setminus\{0\})$ and next, by the Hartogs theorem, that $B,C\in\CO(\CC^n)$. In particular, the function $\Delta:=B^2-4C$ is holomorphic 
on $\CC^n$. This implies that the roots $\xi_\pm(w)=\frac12(-B(w)\pm\sqrt{\Delta(w)})$ are locally bounded with respect to $w\in\CC^n$. Finally,
$f(\lambda b)\in\{\xi_-(\lambda b_1,\dots,\lambda b_n), \xi_+(\lambda b_1,\dots,\lambda b_n)\}$ is bounded near $\lambda=0$.

\halfskip

(c) Using Theorem \ref{ThmMain}(a), we conclude that $\wdtl p_0=(z_1|_{Y_0},\dots,z_n|_{Y_0})$. Hence $Z_{p_0}=M_{n+1}$ and, consequently, 
$(X_0,p_0)$ is an $\CF_0$-domain of holomorphy. In particular, $X_0$ is Stein.

\halfskip

(d) Observe that $(X_0,p_0)$ is a two-fold cover over $\CC^n$, i.e.~every point $a\in p_0(X_0)$ has an open neighborhood $U$ such that $p_0^{-1}(U)=U_1\cap U_2$, $U_1\cap U_2=\varnothing$,
and $p_0|_{U_j}:U_j\too U$ is biholomorphic, $j=1,2$. Hence, there exists a domain $X\subset\CC^n$ such that $\phi:(X,\id_X)\too(X_0,p_0)$ is the 
$\CO(X)$-envelope of holomorphy in the sense of Riemann domains (cf.~\cite{ForZam1983}, see also \cite{JarPfl2000}, Theorem 4.5.18). Put $\CF:=\phi^\ast(\CF_0)$. It remains to prove that $\phi:(X,\CF)\too(Y_0,\CG)$ is the $\CF_C$-envelope of holomorphy. It is obviously a $C$-morphism.
Let $\wdtl\phi:(X,\CF)\too(\wdtl X,\wdtl\CF)$ be the $\CF_C$-envelope of holomorphy. Then there exists a $C$-morphism $\sigma:(Y_0,\CG)\too(\wdtl X,\wdtl\CF)$ such that 
$\sigma\circ\phi\equiv\wdtl\phi$. Since $\sigma|_{X_0}:(X_0,\CF_0)\too(\wdtl X,\wdtl\CF)$ is a $C$-morphism, using (a) we conclude that there exists a $C$-morphism $\tau:(\wdtl X,\wdtl\CF)\too(Y_0,\CG)$ such that 
$\tau\circ\sigma|_{X_0}\equiv\id_{X_0}$. Thus $\tau\circ\sigma\equiv\id$ and hence $\tau$ is biholomorphic ($\tau^{-1}=\sigma$).

\halfskip

(e) We are going to prove that $\id:(Y_0,\CO(Y_0))\too(Y_0,\CO(Y_0))$ is the $\CO(Y_0)_C$-envelope of holomorphy. Suppose that $\alpha:(Y_0,\CO(Y_0))\too(\wdtl Y_0,\CO(\wdtl Y_0))$ is the
$\CO(Y_0)_C$-envelope of holomorphy. Then, by (a),  $\alpha:(X_0,\CF_0)\too(\wdtl Y_0,\CF_0^\phi)$ is a $C$-morphism. Consequently,  there exists a $C$-morphism 
$\sigma:(\wdtl Y_0,\CF_0^\phi)\too(Y_0,\CG)$ such that $\sigma\circ\alpha\equiv\id$.

\halfskip

(f) Suppose that $Y_0$ is a Riemann domain over $\CC^n$, i.e.~there exists a locally biholomorphic mapping $q:Y_0\too\CC^n$. By Theorem \ref{ThmThThmStein}, to get a contradiction it suffices 
to prove that $(Y_0,q)$ is a domain of holomorphy. Suppose that $\alpha:(Y_0,q)\too(Z,r)$ is an $\CO(Y_0)$-extension. 
Then $\alpha:(Y_0,\CO(Y_0))\too(Z,\CO(Z))$ is a $C$-morphism. Consequently, by (e), there exists a $C$-morphism $\sigma:(Z,\CO(Z))\too(Y_0,\CO(Y_0))$ such that $\sigma\circ\alpha\equiv\id$. In particular,
$q\circ\sigma\circ\alpha\equiv q\equiv r\circ\alpha$. Thus $\sigma:(Z,r)\too(Y_0,q)$ is a morphism.
\qed

\bibliographystyle{amsplain}

\end{document}